\newtheorem{theorem}{Theorem}[section]
\newtheorem{lemma}[theorem]{Lemma}
\newtheorem*{claim*}{Claim}
\newtheorem{Main Conjecture}[theorem]{Main Conjecture}
\theoremstyle{remark}
\theoremstyle{plain}
\newtheorem*{Thm}{Theorem}
\newtheorem*{ClaimA}{Claim~A}
\newtheorem*{ClaimB}{Claim~B}
\newtheorem*{ClaimC}{Claim~C}
\newcommand{\excise}[1]{}
\newcommand*\circled[1]{\tikz[baseline=(char.base)]{
  \node[shape=circle,draw,inner sep=1pt] (char) {$#1$};}}
\definecolor{darkgreen}{HTML}{008000}
\definecolor{nicered}{HTML}{d00000}
\definecolor{tangelo}{HTML}{e07000}
\begin{document}
\pagestyle{plain}

\mbox{}
\title{Newton polytopes and symmetric Grothendieck polynomials}
\author{Laura Escobar}
\author{Alexander Yong}
\address{Department of Mathematics \\ University of Illinois at Urbana--Champaign \\ Urbana, IL 61801 \\ USA}
\email{lescobar@illinois.edu, ayong@uiuc.edu}
\date{May 22, 2017}

\maketitle

\begin{abstract} \emph{Symmetric Grothendieck polynomials} are inhomogeneous versions of Schur polynomials that arise in combinatorial $K$-theory.
A polynomial has \emph{saturated Newton polytope} (SNP) if every lattice point in the polytope is an exponent vector.
We show Newton polytopes of these Grothendieck polynomials and their homogeneous components have SNP. Moreover, the Newton polytope of each homogeneous component is a permutahedron. This addresses recent conjectures of C.~Monical-N.~Tokcan-A.~Yong and of A.~Fink-K.~M\'esz\'aros-A.~St.~Dizier in this special case. 
\end{abstract}

\bigskip
Let $s_{\lambda}(x_1,\ldots,x_n)$ be the \emph{Schur polynomial},
which is the generating series for semistandard Young tableaux of shape $\lambda$ with entries in $[n]:=\{1,2,\ldots,n\}$. By work of C.~Lenart \cite[Theorem~2.2]{Lenart}, the {\bf symmetric Grothendieck polynomial}
is given by
\begin{equation}
\label{eqn:Glambda}
G_{\lambda}(x_1,\ldots,x_n)=\sum_{\mu} a_{\lambda\mu}s_{\mu}(x_1,\ldots,x_n).
\end{equation}
The sum is over partitions $\mu$ (identified with their Young diagrams in English notation) with $\leq n$ rows.
The $(-1)^{|\mu|-|\lambda|}a_{\lambda,\mu}$ counts the number of 
row and column strictly increasing
skew tableaux of shape $\mu/\lambda$ with entries in $[n]$ such that
the entries in row $r$ are weakly less than $r-1$.

By (\ref{eqn:Glambda}), $G_{\lambda}(x_1,\ldots,x_n)$ is an inhomogeneous
deformation of $s_{\lambda}(x_1,\ldots,x_n)$ and itself symmetric.
For example, if $n=3$ and $\lambda=(3,1,0)$,
\[G_{\lambda}(x_1,x_2,x_3)={\color{darkgreen}s_{(3,1)}}-(2{\color{blue}s_{(3,1,1)}}+{\color{blue}s_{(3,2,0)}})+2{\color{red}s_{(3,2,1)}}-{\color{tangelo}s_{(3,2,2)}}.\]
These polynomials appear in the study of $K$-theoretic Schubert calculus; we refer the reader to \cite{Lenart, Buch} and the references therein for additional discussion.

More generally, A.~Lascoux--M.-P.~Sch\"utzenberger \cite{LS} recursively defined (possibly nonsymmetric) Grothendieck polynomials associated to any 
permutation $w\in {\mathfrak S}_n$. We mention that A.~Buch \cite{Buch} discovered the 
\emph{set-valued tableaux} formula for $G_{\lambda}(x_1,\ldots,x_n)$; this formula is often taken as a definition in the literature. (Recently, C.~Monical \cite{Monical} found a bijection 
between the aforementioned rules of C.~Lenart and of
A.~Buch.)

The {\bf Newton polytope} of 
a polynomial 
$f=\sum_{\alpha\in {\mathbb Z}_{\geq 0}^n}c_{\alpha} x^{\alpha}\in {{\mathbb C}}[x_1,\ldots,x_n]$
is 
the convex hull of its exponent vectors, i.e., 
${\sf Newton}(f)={\sf conv}(\{\alpha:c_{\alpha}\neq 0\})\subseteq {\mathbb R}^n$.
In \cite{MTY},
$f$~is said to have {\bf saturated Newton polytope} (SNP) if $c_{\alpha}\neq 0$ whenever $\alpha\in {\sf Newton}(f)$. A study of SNP and algebraic combinatorics was given in \emph{loc.~cit.}

	If $\lambda=(\lambda_1\geq\lambda_2\geq\ldots\geq \lambda_n\geq 0)$, the {\bf permutahedron} ${\mathcal P}_{\lambda}\subseteq {\mathbb R}^n$ is the convex hull of the ${\mathfrak S}_n$-orbit of $\lambda$. This theorem extends the old fact that ${\sf Newton}(s_{\lambda}(x_1,\ldots,x_n))={\mathcal P}_{\lambda}$:

\begin{Thm}
$G_{\lambda}(x_1,\ldots,x_n)$ has SNP. In addition, each homogeneous component has SNP with Newton polytope being a
permutahedron (as specified below in (\ref{eqn:Newtonhomog})). 
\end{Thm}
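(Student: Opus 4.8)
The plan is to exploit the Schur expansion (\ref{eqn:Glambda}) together with the sign behaviour of its coefficients. By Lenart's rule recalled above, $(-1)^{|\mu|-|\lambda|}a_{\lambda\mu}\ge 0$, so \emph{within a fixed degree $d=|\lambda|+k$} every coefficient $a_{\lambda\mu}$ with $|\mu|=d$ carries the common sign $(-1)^k$. Hence there is no cancellation, and the monomial support of the degree-$d$ homogeneous component of $G_\lambda$ is exactly the union of the supports of those $s_\mu$ with $|\mu|=d$ and $a_{\lambda\mu}\neq 0$. Using ${\sf Newton}(s_\mu)=\mathcal{P}_\mu$ and the classical fact (Kostka numbers / Rado's majorization theorem) that the monomials of $s_\mu$ are precisely the lattice points of $\mathcal{P}_\mu$, this support equals $\bigcup_\mu\bigl(\mathcal{P}_\mu\cap\mathbb{Z}^n\bigr)$, the union over $M_k:=\{\mu : |\mu|=|\lambda|+k,\ a_{\lambda\mu}\neq 0\}$. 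So everything reduces to understanding $M_k$ in the dominance order.

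The next and central step is to show that $M_k$ has a \emph{unique dominance-maximal element} $\mu^{(k)}$ --- the partition appearing in (\ref{eqn:Newtonhomog}). Here I would analyze Lenart's criterion: $a_{\lambda\mu}\neq 0$ iff $\mu/\lambda$ admits a row- and column-strictly increasing filling with row-$r$ entries $\le r-1$. Since each column of $\mu/\lambda$ is a contiguous vertical strip and the entries along it strictly increase while being bounded by the row index minus one, one gets the necessary conditions $\mu_1=\lambda_1$ and $\mu_r-\lambda_r\le r-1$; conversely, there is a pointwise-minimal candidate filling $v(r,c)=1+\max\bigl(v(r-1,c),\,v(r,c-1)\bigr)$ (terms present only for skew neighbours), and $a_{\lambda\mu}\neq 0$ iff $v(r,c)\le r-1$ at every box. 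Crucially, whether a box can be added to row $r$ depends only on $\mu_{r-1},\mu_r$ and on the skew boxes in rows $\le r$, so adding boxes in lower rows never affects feasibility in higher ones. This ``feasibility never returns once lost'' principle lets me run the greedy that repeatedly adjoins a box to the smallest row in which a partition with a valid filling results; a short exchange argument shows its output dominates every element of $M_k$ (adjoining a box in a smaller row strictly increases a partition in dominance order) and itself lies in $M_k$, so it is $\mu^{(k)}$ (here I assume $M_k\neq\emptyset$; otherwise the component vanishes). Then $\bigcup_{\mu\in M_k}\mathcal{P}_\mu=\mathcal{P}_{\mu^{(k)}}$, the degree-$(|\lambda|+k)$ component has Newton polytope the permutahedron $\mathcal{P}_{\mu^{(k)}}$, and it has SNP since its lattice points are exactly the monomials of the genuine summand $s_{\mu^{(k)}}$. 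I expect this combinatorial analysis of $M_k$ and the verification that the greedy is optimal to be the main obstacle.

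Finally, for $G_\lambda$ itself we have ${\sf Newton}(G_\lambda)={\sf conv}\bigcup_k\mathcal{P}_{\mu^{(k)}}$, and SNP amounts to showing that a lattice point $\alpha$ of this hull with $\sum_i\alpha_i=|\lambda|+k$ already lies in $\mathcal{P}_{\mu^{(k)}}$ --- for then $\alpha$ is a monomial of the $k$-th homogeneous component, hence of $G_\lambda$. Since the greedy above simply keeps adjoining boxes, $\lambda=\mu^{(0)}\subseteq\mu^{(1)}\subseteq\cdots$, and by the same feasibility-monotonicity the successive boxes go into \emph{weakly increasing} rows, so $k\mapsto \mu^{(k)}_1+\cdots+\mu^{(k)}_j$ is concave for every $j$. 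Writing $\alpha=\sum_j t_j\beta^{(j)}$ with $\beta^{(j)}\in\mathcal{P}_{\mu^{(j)}}$, $t_j\ge 0$, $\sum_j t_j=1$ and $\sum_j t_j j=k$, Rado's description of the permutahedron gives, for each $S\subseteq[n]$, $\sum_{i\in S}\alpha_i\le\sum_j t_j\bigl(\mu^{(j)}_1+\cdots+\mu^{(j)}_{|S|}\bigr)\le\mu^{(k)}_1+\cdots+\mu^{(k)}_{|S|}$, where the last inequality is concavity plus Jensen; combined with $\sum_i\alpha_i=|\mu^{(k)}|$ this places $\alpha$ in $\mathcal{P}_{\mu^{(k)}}$, and the proof is complete.
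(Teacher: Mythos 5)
Your proposal is correct, and its skeleton matches the paper's: you construct the same greedy sequence $\mu^{(0)}\subseteq\mu^{(1)}\subseteq\cdots$, identify $\mu^{(k)}$ as the dominance-maximum of $M_k$, and invoke Rado's theorem (\ref{eqn:rado}) to get SNP for each homogeneous component with Newton polytope $\mathcal{P}_{\mu^{(k)}}$. Two things you do differ usefully from the paper. First, you make explicit the sign observation $(-1)^{|\mu|-|\lambda|}a_{\lambda\mu}\ge 0$, so that within a fixed degree there is no cancellation and the degree-$d$ support is literally the union of the $\mathcal{P}_\mu\cap\mathbb{Z}^n$; the paper leaves this implicit (it is encapsulated in the cited \cite[Proposition~2.5]{MTY}). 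Second, and more substantively, you replace the paper's Claim~B (majorization of a convex combination via a rearrangement step and \cite[Proposition~A.1.b]{majorization}) and Claim~C (an explicit computation via two lemmas and a case analysis) by a single, cleaner observation: the rows $r_1\le r_2\le\cdots$ in which the greedy adds boxes are weakly increasing, so each partial sum $k\mapsto \mu^{(k)}_1+\cdots+\mu^{(k)}_j$ is concave, and Jensen plus the $H$-description of the permutahedron immediately yields $\alpha\in\mathcal{P}_{\mu^{(K)}}$. The paper does in fact use the weakly-increasing-rows fact implicitly (in Lemma~\ref{lem:sumx}, ``Suppose we added boxes $a,a+1,\ldots,a+b$ to row $r$''), but never names it or leverages the resulting concavity; your framing is shorter and more conceptual. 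One caveat: both your exchange argument for why the greedy output is the dominance maximum and the paper's proof of Claim~A are terse in the same place --- neither explicitly handles the case where a row is blocked by the Young-diagram condition $\mu_r=\mu_{r-1}$ rather than by the count bound $\mu_r-\lambda_r\le r-1$; it can be closed (iterate up to the smallest $s$ with $\mu^{(k)}_s=\cdots=\mu^{(k)}_r$ and derive a contradiction there), but in a polished write-up this deserves a sentence.
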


\begin{wrapfigure}{r}{0.44\textwidth}
\begin{center}
\includegraphics[scale=0.25]{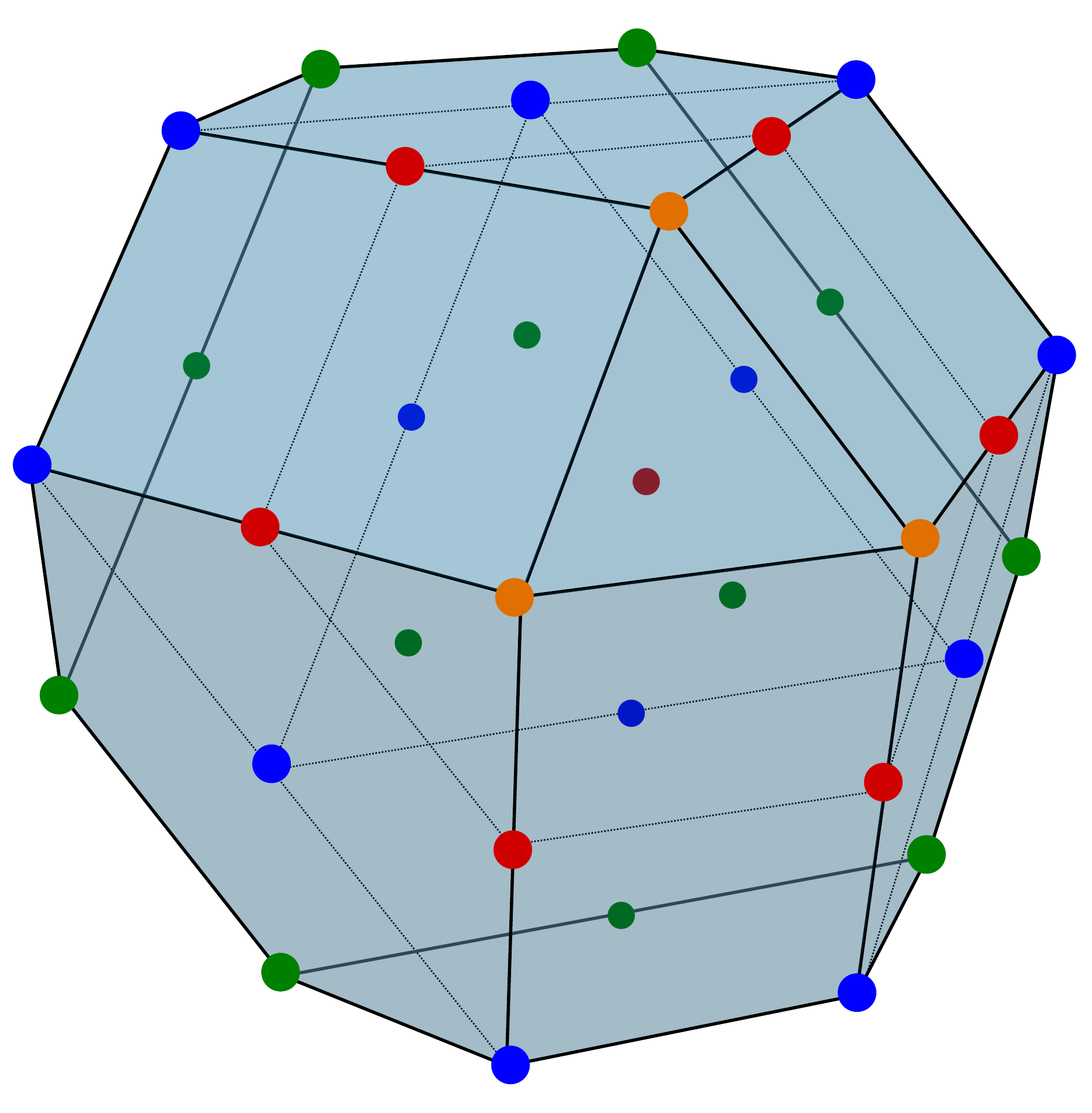}
\end{center}
\caption{The Newton polytope of $G_{\lambda}(x_1,x_2,x_3)$ for $\lambda=(3,1,0)$.  Each color indicates degree.} 
\end{wrapfigure}

The first assertion addresses \cite[Conjecture~5.5]{MTY} for the case that the permutation $w$ is {\bf Grassmannian} at position $n$;
that is $w(i)<w(i+1)$ unless $i=n$. 
The second assertion responds, in this case, to a conjecture
of A.~Fink-K.~M\'esz\'aros-A.~St.~Dizier \cite[Conjecture~5.1]{Meszaros}. In \emph{loc.~cit.}, these conjectures were proved
for the case that $w=1w'$ where $w'$ is a {\bf dominant permutation}, i.e., $w$ is $132$-avoiding.

\excise{
For a larger example, let $n=4$ and $\lambda=(4,2,1,0)$. Then
\begin{align*}
G_{\lambda}(x_1,x_2,x_3,x_4)= & s_{(4,2,1,0)}\\
& -(3s_{(4,2,1,1)}+s_{(4,2,2,0)}+2s_{(4,2,2,0)}+s_{(4,3,1,0)})\\
& +(6s_{(4,2,2,1)}+3s_{(4,3,1,1)}+2s_{(4,3,2,0)})\\
& -(5s_{(4,2,2,2)}+6s_{(4,3,2,1)}+s_{(4,3,3,0)})\\
& +(5s_{(4,3,2,2)}+3s_{(4,3,3,1)})\\
& -3s_{(4,3,3,2)}\\
& +s_{(4,3,3,3)}\\
\end{align*}
Notice that for each homogeneous component of degree $k$, there is a Schur function
$s_{\mu^{(k)}}$ where $\mu^{(k)}$ is the \emph{dominance order} maximum in the set $\{\mu:|\mu|=k,
a_{\lambda,\mu}\neq 0\}$. We explain below that this implies, by \cite[Proposition~2.5]{MTY}, that said component is SNP. Since the Newton polytope of $s_{\lambda}(x_1,\ldots,x_n)$
is ${\mathcal P}_{\lambda}$, the proof that $G_{\lambda}(x_1,x_2,x_3,x_4)$ itself is SNP is the statement that
\[{\mathcal P}_{(4,2,1,0)}\cup {\mathcal P}_{(4,3,1,0)}\cup{\mathcal P}_{(4,3,2,0)}\cup {\mathcal P}_{(4,3,3,0)}\cup {\mathcal P}_{(4,3,3,1)}\cup {\mathcal P}_{(4,3,3,2)}\cup {\mathcal P}_{(4,3,3,3)}\]
is convex. }

\medskip
\noindent
\emph{Proof of the Theorem:}  Let $\mu^{(0)}:=\lambda$. For $1\leq k\leq n$ define
$\mu^{(k)}$ to be $\mu^{(k-1)}$ with a  box added
in the northmost row $r$  such that $\mu^{(k-1)}_r-\mu^{(0)}_r<r-1$ and the addition of the box gives a Young diagram. 
Stop when no such $r$ exists or $k=n$.
Suppose we obtain $N$ such partitions.

Recall that {\bf dominance order} on partitions of a fixed size
is defined by 
$\theta\leq_D \delta$ if   $\sum_{j=1}^t \theta_j\leq
\sum_{j=1}^t \delta_j$ for $t\geq 1$.

\begin{ClaimA}
$\mu^{(k)}$ is the $\leq_D$-maximum among all shapes $\mu$ of size $|\lambda|+k$
such that $a_{\lambda,\mu}\neq 0$.
\end{ClaimA}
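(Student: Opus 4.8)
The plan is to turn the nonvanishing condition $a_{\lambda,\mu}\neq 0$ into an explicit pair of inequalities on $\mu$, identify the resulting family of shapes, and then verify by a direct prefix-sum comparison that $\mu^{(k)}$ sits on top of this family in dominance order.

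\emph{Step 1: a closed form for the shapes with $a_{\lambda,\mu}\neq 0$.} By the rule quoted above for $a_{\lambda,\mu}$, for $\mu$ with at most $n$ rows (as in~(\ref{eqn:Glambda})) one has $a_{\lambda,\mu}\neq 0$ exactly when $\mu/\lambda$ admits a row- and column-strict filling with entries in $[n]$ whose row-$r$ entries are $\leq r-1$. I claim this holds if and only if $\lambda\subseteq\mu$ and $\mu_r-\lambda_r\leq r-1$ for all $r$. Necessity is immediate, since $\mu/\lambda$ must be a skew shape and row $r$ then carries $\mu_r-\lambda_r$ strictly increasing entries drawn from $\{1,\dots,r-1\}$. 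For sufficiency I exhibit a single filling: place $r-\mu_r+c-1$ in the cell in row $r$, column $c$. Along each row this increases by $1$ and runs through $\{\,r-(\mu_r-\lambda_r),\dots,r-1\,\}\subseteq\{1,\dots,r-1\}$, so the filling is row-strict with entries in $[n]$ bounded by $r-1$; down each column it increases by $1+(\mu_r-\mu_{r+1})\geq 1$ because $\mu$ is a partition, so it is column-strict. Hence the displayed inequalities describe precisely the shapes entering Claim~A.

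\emph{Step 2: effective caps and the profile of $\mu^{(k)}$.} Put $c_j:=\lambda_j+j-1$ and $\widetilde c_j:=\min_{i\leq j}c_i$; then $\widetilde c_1\geq\widetilde c_2\geq\cdots$ and $\lambda_j\leq\widetilde c_j$. For any partition $\mu$ one has $\mu_j\leq\mu_i\leq c_i$ whenever $i\leq j$, so the condition of Step~1 is equivalent to $\lambda_j\leq\mu_j\leq\widetilde c_j$ for all $j$; consequently the shapes of size $|\lambda|+k$ with $a_{\lambda,\mu}\neq 0$ are exactly the members of $\mathcal{M}_k:=\{\mu \text{ a partition with } \leq n \text{ rows},\ |\mu|=|\lambda|+k,\ \lambda_j\leq\mu_j\leq\widetilde c_j\ \forall j\}$. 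Next I claim the recursion defining $\mu^{(k)}$ is the same as: repeatedly add a box to the topmost row $r$ with current $\mu_r<\widetilde c_r$. Indeed, the stated condition ``$\mu_r-\lambda_r<r-1$ and adding a box keeps a Young diagram'' reads ``$\mu_r<c_r$ and $\mu_r<\mu_{r-1}$''; a short induction on $r$ shows that if rows $1,\dots,r-1$ all fail this test then $\mu_i=\widetilde c_i$ for $i<r$, and then ``$\mu_r<c_r$ and $\mu_r<\mu_{r-1}=\widetilde c_{r-1}$'' is equivalent to ``$\mu_r<\min(c_r,\widetilde c_{r-1})=\widetilde c_r$'', while adding a box in that topmost below-$\widetilde c$ row keeps a partition since $\mu_r+1\leq\widetilde c_r\leq\widetilde c_{r-1}=\mu_{r-1}$. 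Because this greedy fills rows strictly from the top, there is an index $m=m(k)$ with $\mu^{(k)}_j=\widetilde c_j$ for $j<m$, with $\lambda_m\leq\mu^{(k)}_m\leq\widetilde c_m$, and with $\mu^{(k)}_j=\lambda_j$ for $j>m$. Counting boxes, $\sum_{i\leq t}\mu^{(k)}_i=\sum_{i\leq t}\widetilde c_i$ for $t<m$, and $\sum_{i\leq t}\mu^{(k)}_i=k+\sum_{i\leq t}\lambda_i$ for all $t\geq m$ (every added box lies in a row $\leq m$).

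\emph{Step 3: prefix-sum domination.} Fix $\nu\in\mathcal{M}_k$ and $t\geq 1$. If $t<m$ then $\sum_{i\leq t}\nu_i\leq\sum_{i\leq t}\widetilde c_i=\sum_{i\leq t}\mu^{(k)}_i$, using $\nu_i\leq\widetilde c_i$. If $t\geq m$ then, using $\nu_i\geq\lambda_i$,
\[
\sum_{i\leq t}\nu_i=\bigl(|\lambda|+k\bigr)-\sum_{i>t}\nu_i\leq\bigl(|\lambda|+k\bigr)-\sum_{i>t}\lambda_i=k+\sum_{i\leq t}\lambda_i=\sum_{i\leq t}\mu^{(k)}_i.
\]
In every case the $t$-th partial sum of $\mu^{(k)}$ is at least that of $\nu$, i.e.\ $\nu\leq_D\mu^{(k)}$, which is Claim~A.

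\emph{Expected main obstacle.} The load-bearing part is Step~1: extracting from Lenart's tableau rule the single clean inequality $\mu_r-\lambda_r\leq r-1$, and in particular producing an admissible filling for every shape obeying it. The linear filling $(r,c)\mapsto r-\mu_r+c-1$ does the job, but one has to think of it and verify column-strictness. Given Step~1, the reformulation through the running minima $\widetilde c_j$ and the two-sided prefix-sum estimate are routine; the only bookkeeping points are the boundary cases — when $m=1$, when rows are capped to lie in $[n]$ as the expansion~(\ref{eqn:Glambda}) requires, and the termination of the recursion (which is harmless, as it coincides with $\mathcal{M}_k$ having no element, or with $k=n$).
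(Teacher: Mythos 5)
Your proof is correct but takes a genuinely different route from the paper's. The paper establishes $a_{\lambda,\mu^{(k)}}\neq 0$ by inductively appending the smallest admissible label to a growing tableau, and then argues maximality by contradiction: if $\mu\nleq_D\mu^{(k)}$ fails first at row $r$, the bound $\mu_r\le\lambda_r+(r-1)$ forces $\mu^{(k)}_r-\lambda_r<r-1$, which is said to mean the greedy ``must have'' added another box to row $r$. Your Step~1 instead isolates a closed-form support criterion, $a_{\lambda,\mu}\neq 0$ iff $\lambda\subseteq\mu$ and $\mu_r-\lambda_r\le r-1$, verified by the single explicit filling $(r,c)\mapsto r-\mu_r+c-1$; your Steps~2--3 then reformulate via the running caps $\widetilde c_j=\min_{i\le j}(\lambda_i+i-1)$, show $\mu^{(k)}$ saturates these caps from the top down, and read off the domination $\nu\le_D\mu^{(k)}$ from a two-regime prefix-sum estimate ($t<m$ via the caps, $t\ge m$ via $\nu_i\ge\lambda_i$ and equal total size). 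What your version buys is an explicit description $\lambda_j\le\mu_j\le\widetilde c_j$ of \emph{all} contributing shapes (which the paper never states), making the domination check mechanical once the staircase profile of $\mu^{(k)}$ is in hand; the paper's contradiction is shorter on the page but tacitly relies on the same top-down-filling structure you make explicit, since one must rule out that row $r$ is blocked by the Young-diagram constraint (i.e.\ $\mu^{(k)}_r=\mu^{(k)}_{r-1}$) rather than by the $r-1$ cap --- and resolving that case effectively requires the stronger bound $\mu_r\le\widetilde c_r$ you derive. Two things you flag as bookkeeping deserve one line each in a write-up: the ``short induction'' that failing rows sit at $\widetilde c_i$ needs the invariant $\mu^{(j)}_i\le\widetilde c_i$ carried along, and the recursion must be understood to halt before producing a shape with more than $n$ rows, so that $\mu^{(k)}$ itself lies in the index set of (\ref{eqn:Glambda}). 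Modulo those, your argument is complete.
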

\noindent
\emph{Proof of Claim~A:}
The skew shape $\mu^{(k)}/\lambda$ consists of the $k$ boxes added to $\lambda$. We can inductively define a skew tableau $T_k$ of this shape by adding the minimum possible label to 
$T_{k-1}$ (in the box $\mu^{(k)}/\mu^{(k-1)}$)
that maintains row and column strictness. It is straightforward that this tableau exists and witnesses $a_{\lambda,\mu^{(k)}}\neq 0$.
\excise{
Suppose we add a label to row $r$. If there are no boxes to the left of the $k$-th box we can make this label be one more than the label on top, so the label is $\le r-2+1$.
Suppose there is a box to the left of the $k$-th box. Consider $T_k$ restricted to rows $r-1$ and $r$:
\begin{center}
\begin{tikzpicture}
\draw (4.5,1)--(4.5,0)--(0,0)--(0,.5)--(5.5,.5)--(5.5,1)--(2,1)--(2,0);
\draw (2.5,0)--(2.5,1);
\draw (3.5,0)--(3.5,1);
\draw (4,0)--(4,1);
\node at (4.25,.25) {$y_1$};
\node at (4.25,.75) {$x_1$};
\node at (3.75,.25) {$y_2$};
\node at (3.75,.75) {$x_2$};
\node at (3,.25) {$\cdots$};
\node at (3,.75) {$\cdots$};
\node at (2.25,.25) {$y_b$};
\node at (2.25,.75) {$x_b$};
\draw (1.5,0)--(1.5,.5);
\node at (1.75,.25) {$z_a$};
\node at (1,.25) {$\cdots$};
\node at (5,.75) {$\cdots$};
\draw (.5,0)--(.5,.5);
\node at (.25,.25) {$z_1$};
\end{tikzpicture}.
\end{center}
We know that $x_i\le r-(i+1)$ for $i\ge 1$, $y_i\le r-(i-1)$ for $i\ge 2$, and $a+b\le r-1$. Furthermore, since the $z_i$ have no labels on top, they must satisfy $z_i=i$.
If the smallest possible $y_1=r$ then $y_i=r-(i-1)$ for $i\ge2$. This forces $z_a=r-b$ contradicting $a+b\le r-1$.}

Let $\mu$ be a shape such that $a_{\lambda,\mu}\neq 0$.
Then $\mu_i\le \lambda_i+(i-1)$ for all $i$.
Suppose that $\mu\nleq_D\mu^{(k)}$
and let $r(>1)$ be the first row such that
$
\mu_{1}+\cdots+\mu_{r}>\mu^{(k)}_{1}+\cdots+\mu^{(k)}_{r}.
$
Then 
$$
\mu_{1}+\cdots+\mu_{r-1}\le\mu^{(k)}_{1}+\cdots+\mu^{(k)}_{r-1} \ \ \text{ and } \ \ \mu_{r}>\mu^{(k)}_{r}.
$$
This contradicts the construction of $\mu^{(k)}$ because by
$
\mu_r^{(k)}-\mu_r^{(0)}<\mu_r-\mu_r^{(0)}\le r-1
$
$\mu^{(k)}$ must have another box in row $r$.
\qed

R.~Rado's theorem \cite{rado} states that for two partitions
$\theta,\delta$ of the same size,
\begin{equation}
\label{eqn:rado}
{\mathcal P}_{\theta}\subseteq {\mathcal P}_{\delta}
\iff \theta\leq_D \delta.
\end{equation}
The Theorem's second assertion is immediate from
(\ref{eqn:rado}) and Claim~A. In fact if $G_{\lambda}[k]$
denotes the degree $k$ homogeneous component of $G_{\lambda}(x_1,\ldots,x_n)$ then 
\begin{equation}
\label{eqn:Newtonhomog}
{\sf Newton}(G_{\lambda}[k])={\mathcal P}_{\mu^{(k)}}.
\end{equation}

Let $v^{(k)}\in {\mathcal P}_{\mu^{(k)}}$. Thus $v^{(k)}$ is a nonnegative vector (being a convex combination of nonnegative vectors). 
Rado's theorem implies that $v^{(k)}$ is \emph{majorized} by $\mu^{(k)}$. That is, the rearrangement
$(v^{(k)})^\downarrow$ of the components of $v^{(k)}$ into decreasing order satisfies 
\begin{equation}
\label{eqn:letusmay19}
(v^{(k)})^\downarrow\leq_D \mu^{(k)}.
\end{equation}

Suppose
\[v=\sum_{k=0}^N c_k v^{(k)}
\text{ \ \ 
where \ \ $\sum_{k=0}^N c_k=1$ \ \  and \ \  $c_k\geq 0$}\] 
is a convex
combination of the vectors $v^{(k)}$. 

\begin{ClaimB}
$v$ is majorized by $\overline \mu:=\sum_{k=0}^N c_k\mu^{(k)}$. 
\end{ClaimB}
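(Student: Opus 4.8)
The plan is to verify the defining inequalities of majorization for $v$ against $\overline\mu$ directly, exploiting the linearity of $v=\sum_k c_kv^{(k)}$ together with the variational formula for partial sums of a sorted vector. As a preliminary observation, note that each $\mu^{(k)}$ is a partition, hence has weakly decreasing entries, so the convex combination $\overline\mu=\sum_{k=0}^N c_k\mu^{(k)}$ also has weakly decreasing entries; thus $\overline\mu^{\downarrow}=\overline\mu$, and proving that $v$ is majorized by $\overline\mu$ amounts to proving $v^{\downarrow}\leq_D\overline\mu$.

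The engine of the argument is the elementary identity, valid for any $w\in{\mathbb R}^n$ and any $t\geq 1$,
\[
\sum_{j=1}^{t}w^{\downarrow}_j \;=\; \max\Bigl\{\textstyle\sum_{i\in S}w_i \ :\ S\subseteq[n],\ |S|=t\Bigr\}.
\]
Fixing $t$ and a subset $S$ with $|S|=t$, I would expand $v=\sum_k c_kv^{(k)}$, apply this identity to each $v^{(k)}$, and then invoke (\ref{eqn:letusmay19}):
\[
\sum_{i\in S}v_i=\sum_{k=0}^N c_k\sum_{i\in S}v^{(k)}_i\;\leq\;\sum_{k=0}^N c_k\sum_{j=1}^{t}(v^{(k)})^{\downarrow}_j\;\leq\;\sum_{k=0}^N c_k\sum_{j=1}^{t}\mu^{(k)}_j\;=\;\sum_{j=1}^{t}\overline\mu_j.
\]
Taking the maximum over all such $S$ yields $\sum_{j=1}^{t}v^{\downarrow}_j\leq\sum_{j=1}^{t}\overline\mu_j$ for every $t$. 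When $t=n$ this is an equality, since each $v^{(k)}\in{\mathcal P}_{\mu^{(k)}}$ gives $\sum_i v^{(k)}_i=|\mu^{(k)}|=\sum_i\mu^{(k)}_i$, whence $\sum_i v_i=\sum_k c_k|\mu^{(k)}|=\sum_i\overline\mu_i$. Therefore $v^{\downarrow}\leq_D\overline\mu$, which is the claim.

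I do not anticipate a genuine obstacle here: the whole content is the convexity of the functional $w\mapsto\sum_{j=1}^{t}w^{\downarrow}_j$, repackaged via the subset description above. The one point worth flagging is that $\overline\mu$ need not be a partition of an integer; this causes no trouble, since majorization and Rado's criterion (\ref{eqn:rado}) make sense for arbitrary weakly decreasing real vectors of a common coordinate sum, and it is in this form that Claim~B will feed into the remainder of the proof of the Theorem.
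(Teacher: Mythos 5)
Your proof is correct, and it reaches the conclusion by a route that is mathematically equivalent to the paper's but packaged differently. The paper introduces an intermediate vector $v^\star:=\sum_{k}c_k(v^{(k)})^\downarrow$, shows $v^\star\leq_D\overline\mu$ by summing (\ref{eqn:letusmay19}) over $k$, shows $v$ is majorized by $v^\star$ by citing the standard fact $a+b\preceq a^\downarrow+b^\downarrow$ from Marshall--Olkin--Arnold, and finishes by transitivity. You instead run a single inequality chain grounded in the variational identity $\sum_{j\leq t}w^\downarrow_j=\max_{|S|=t}\sum_{i\in S}w_i$; this identity is precisely what underlies the cited majorization fact, so in effect you are reproving the needed lemma from scratch rather than invoking the reference. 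What you gain is a self-contained, citation-free argument in one pass, plus the explicit (and correct) observations that $\overline\mu$ is automatically weakly decreasing and that equality holds at $t=n$ because $|v^{(k)}|=|\mu^{(k)}|$ for each $k$ -- details the paper leaves implicit. What the paper gains is modularity: the intermediate object $v^\star$ isolates where (\ref{eqn:letusmay19}) is used from where rearrangement is used, and the appeal to the literature keeps the write-up short. Both are sound; yours is the more elementary exposition of the same idea.
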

\noindent\emph{Proof of Claim B:} 
Let $v^\star:=\sum_{k=0}^N c_k (v^{(k)})^\downarrow$.
By (\ref{eqn:letusmay19}), for any $t\geq 1$ we have
$c_k\sum_{j=1}^t (v^{(k)})^\downarrow_j \leq c_k\sum_{j=1}^t \mu^{(k)}_j$.
By summing both sides over all
$k$ and interchanging the order of summation we conclude $v^\star$ is majorized by $\overline\mu$. It is a standard property of majorization that $a+b$ is majorized by $a^\downarrow+b^\downarrow$ \cite[Proposition~A.1.b]{majorization}. Thus $v$ is majorized by $v^\star$.
Now use that majorization (being a preorder) is transitive.  
\qed

\begin{ClaimC}
Suppose $|\overline{\mu}|-|\mu^{(0)}|=K$, then $\overline{\mu}$ is majorized by $\mu^{(K)}$.
\end{ClaimC}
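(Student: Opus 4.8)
The plan is to unravel the majorization claim into partial-sum inequalities and then exploit the fact that $\sum_{j\le t}\mu^{(k)}_j$ grows with $k$ like the minimum of a line and a constant. First the bookkeeping: since $\mu^{(k)}$ is $\mu^{(k-1)}$ with one box added, $|\mu^{(k)}|=|\mu^{(0)}|+k$ for $0\le k\le N$, so $|\overline\mu|=\sum_{k=0}^N c_k|\mu^{(k)}|=|\mu^{(0)}|+\sum_{k=0}^N c_k k$. Thus the hypothesis is exactly that $K:=\sum_{k=0}^N c_k k$ is an integer (it always lies in $[0,N]$), making $\mu^{(K)}$ meaningful with $|\overline\mu|=|\mu^{(K)}|$. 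As $\overline\mu$ is a nonnegative combination of weakly decreasing vectors it is itself weakly decreasing, so $\overline\mu^\downarrow=\overline\mu$, and ``$\overline\mu$ is majorized by $\mu^{(K)}$'' becomes precisely the assertion that $\sum_{j=1}^t\overline\mu_j\le\sum_{j=1}^t\mu^{(K)}_j$ for every $t\ge 1$ (with equality at $t=n$).

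The one structural input I would isolate is a monotonicity statement about the construction. Let $r_k$ be the row containing the box $\mu^{(k)}/\mu^{(k-1)}$, so $\mu^{(k)}_j-\mu^{(0)}_j=\bigl|\{i\le k:r_i=j\}\bigr|$ for all $j$. Call a row $r$ \emph{available} for a shape $\nu\supseteq\mu^{(0)}$ if $\nu_r-\mu^{(0)}_r<r-1$ and $\nu_r<\nu_{r-1}$ (so that the construction may place a box in row $r$ of $\nu$); thus $r_k$ is the northmost row available for $\mu^{(k-1)}$, and no row is ever available at index $1$ since $\nu_1\ge\mu^{(0)}_1$. I claim $r_1\le r_2\le\cdots\le r_N$. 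Indeed, passing from $\mu^{(k-1)}$ to $\mu^{(k)}$ changes only the $r_k$-th coordinate, and for any row $r$ with $1<r<r_k$ neither $\nu_r$ nor $\nu_{r-1}$ is altered (both $r$ and $r-1$ are $<r_k$, hence different from $r_k$); so row $r$ has the same availability status for $\mu^{(k)}$ as for $\mu^{(k-1)}$. Since every row below $r_k$ is unavailable for $\mu^{(k-1)}$ by the choice of $r_k$ (row $1$ trivially so), those rows remain unavailable for $\mu^{(k)}$, whence $r_{k+1}\ge r_k$.

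Now I would combine the two. Fix $t\ge 1$ and set $m=\bigl|\{i:r_i\le t\}\bigr|$. Monotonicity of $(r_i)$ forces $\{i:r_i\le t\}=\{1,\dots,m\}$, so $\sum_{j=1}^t\mu^{(k)}_j=\sum_{j=1}^t\mu^{(0)}_j+\min(k,m)$ for every $0\le k\le N$. Averaging with the weights $c_k$ and using $\min(k,m)\le k$ and $\min(k,m)\le m$ termwise,
\[
\sum_{j=1}^t\overline\mu_j=\sum_{j=1}^t\mu^{(0)}_j+\sum_{k=0}^N c_k\min(k,m)\le\sum_{j=1}^t\mu^{(0)}_j+\min\Bigl(\textstyle\sum_{k=0}^N c_k k,\ \sum_{k=0}^N c_k m\Bigr)=\sum_{j=1}^t\mu^{(0)}_j+\min(K,m)=\sum_{j=1}^t\mu^{(K)}_j .
\]
Since $t$ was arbitrary, this is the majorization $\overline\mu\leq_D\mu^{(K)}$ demanded by the claim.

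The main obstacle is the monotonicity $r_1\le\cdots\le r_N$: one must verify that placing a box in row $r_k$ cannot render a previously unavailable northern row available, which is precisely why it matters that for each $r<r_k$ both the $r$-th entry and the $(r-1)$-st entry of the shape are left untouched. Everything after that — rephrasing majorization as partial-sum inequalities, and the termwise $\min$ estimate — is routine.
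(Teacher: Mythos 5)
Your proof is correct and it streamlines the paper's argument while also filling a small gap. Both you and the paper compute the row-partial sums of $\overline\mu$ and of $\mu^{(K)}$ and compare them, but the paper expresses $\sum_{j\le r}\overline\mu_j$ via the sum $c_1+2c_2+\cdots+\ell c_\ell+\cdots+\ell c_N$ and then splits into two cases according to whether $r<r_K$ or $r\ge r_K$. Your reformulation $\sum_{j\le t}\mu^{(k)}_j=\sum_{j\le t}\mu^{(0)}_j+\min(k,m)$, together with the termwise bound $\sum_k c_k\min(k,m)\le\min\bigl(\sum_k c_k k,\ m\bigr)=\min(K,m)$, collapses both cases into one line; note that $c_1+2c_2+\cdots+\ell c_\ell+\cdots+\ell c_N$ is exactly $\sum_k c_k\min(k,\ell)$, so the two computations agree. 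More substantively, the paper's lemma opens with ``Suppose we added boxes $a,a+1,\ldots,a+b$ to row $r$,'' silently assuming that the boxes placed in any fixed row are consecutive in $k$ --- precisely the monotonicity $r_1\le\cdots\le r_N$ you isolate. The paper never proves this; your availability argument does, and correctly: the update from $\mu^{(k-1)}$ to $\mu^{(k)}$ changes only coordinate $r_k$, so for $1<r<r_k$ neither $\nu_r$ nor $\nu_{r-1}$ moves, the availability test for row $r$ is unaltered, and hence no row strictly north of $r_k$ can reopen. So your write-up both tightens the endgame and makes explicit a fact the paper takes for granted.
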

\noindent
\emph{Proof of Claim~C:}
Let 
$r_k:=\text{row on which } k\text{-th box gets added to }\mu^{(k)}$ (so
$\mu^{(k)}=\mu^{(0)}+e_{r_1}+\cdots+e_{r_k}$,
where $e_i$ is a standard basis vector).

\begin{lemma}\label{lem:sumx} 
For any (row) $r$
\begin{align*}
\overline{\mu}_1+\cdots+\overline{\mu}_r=&\mu^{(0)}_1+\cdots+\mu^{(0)}_r+c_1+2c_2+\cdots+\ell c_{\ell}+\cdots+\ell c_N,
\end{align*}
where $\ell$ is the largest $i$ such that $r_i=r$.
\end{lemma}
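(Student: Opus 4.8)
The plan is to first rewrite $\overline{\mu}$ in terms of the basis vectors $e_{r_j}$, then reduce the lemma to a purely combinatorial identity about the sequence $(r_1,r_2,\ldots,r_N)$, and finally prove the key structural fact that this sequence is weakly increasing.

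First I would substitute $\mu^{(k)}=\mu^{(0)}+e_{r_1}+\cdots+e_{r_k}$ into $\overline{\mu}=\sum_{k=0}^N c_k\mu^{(k)}$ and use $\sum_{k=0}^N c_k=1$ to get
\[
\overline{\mu}=\mu^{(0)}+\sum_{k=1}^N c_k\,(e_{r_1}+\cdots+e_{r_k}).
\]
Summing the first $r$ coordinates and interchanging the order of summation yields
\[
\overline{\mu}_1+\cdots+\overline{\mu}_r=\mu^{(0)}_1+\cdots+\mu^{(0)}_r+\sum_{k=1}^N c_k\,\bigl|\{\,j:1\le j\le k,\ r_j\le r\,\}\bigr|,
\]
so the lemma is equivalent to the identity $\bigl|\{\,j\le k: r_j\le r\,\}\bigr|=\min(k,\ell)$ for every $k$, where $\ell$ is the largest index $i$ with $r_i=r$.

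The crux is to show $r_1\le r_2\le\cdots\le r_N$. Suppose the $k$-th box is placed in row $r_k$, and let $r'<r_k$. By the ``northmost row'' rule used to pass from $\mu^{(k-1)}$ to $\mu^{(k)}$, row $r'$ was \emph{not} eligible at that step: either it is saturated, $\mu^{(k-1)}_{r'}-\mu^{(0)}_{r'}=r'-1$, or it is blocked by the shape condition, $\mu^{(k-1)}_{r'}=\mu^{(k-1)}_{r'-1}$. Passing to $\mu^{(k)}$ changes only coordinate $r_k$, and both $r'$ and $r'-1$ are strictly less than $r_k$; hence a saturated row stays saturated and a blocked row stays blocked, so no row above $r_k$ becomes eligible at step $k+1$, forcing $r_{k+1}\ge r_k$. (Row $1$ is never eligible since its eligibility condition reads $\mu^{(k-1)}_1-\mu^{(0)}_1<0$, so no degenerate case arises at the top.)

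Given monotonicity the identity is immediate: for $k\le\ell$ all of $r_1,\ldots,r_k$ are $\le r_\ell=r$, so the count is $k$; for $k>\ell$, maximality of $\ell$ together with monotonicity gives $r_{\ell+1}>r$, so $r_j\le r$ exactly when $j\le\ell$ and the count is $\ell$. Substituting back gives $\overline{\mu}_1+\cdots+\overline{\mu}_r=\mu^{(0)}_1+\cdots+\mu^{(0)}_r+c_1+2c_2+\cdots+\ell c_\ell+\ell c_{\ell+1}+\cdots+\ell c_N$, as claimed. (If no box is ever added in row $r$, the same computation applies verbatim with $\ell$ taken to be the largest index $i$ with $r_i<r$.) The only real obstacle here is the monotonicity claim; once it is in hand, the rest is bookkeeping.
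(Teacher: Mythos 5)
Your proof is correct, and it takes a genuinely different route from the paper's. The paper fixes a single row $r$, writes ``Suppose we added boxes $a,a+1,\ldots,a+b$ to row $r$'' (so the boxes of a given row occupy a consecutive block of steps), computes $\overline{\mu}_r$ by splitting the sum into the three regimes before, during, and after that block, and then closes with ``the lemma follows by a simple induction'' over the rows. Your proof instead substitutes $\mu^{(k)}=\mu^{(0)}+e_{r_1}+\cdots+e_{r_k}$, interchanges sums, and reduces the whole partial sum $\overline{\mu}_1+\cdots+\overline{\mu}_r$ to the counting identity $|\{j\le k: r_j\le r\}|=\min(k,\ell)$; this gives the lemma in one pass with no further induction.

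The more important difference is that you identify and prove the structural fact both arguments rest on: the sequence $r_1\le r_2\le\cdots\le r_N$ is weakly increasing. The paper tacitly uses this (it is what makes the boxes added to row $r$ a consecutive block, and what underwrites Lemma~\ref{lem:sumla} and the equality $\ell=b_1+\cdots+b_r$ in Case~1 of Claim~C), but never states or proves it. Your argument for it---a row north of $r_k$ is either saturated or blocked, and neither of these conditions can change when a box is added to row $r_k$, since only coordinate $r_k$ is modified---is sound, including the observation that row $1$ is never eligible so the boundary case $r'-1=0$ never arises. Making this monotonicity explicit is the main thing your write-up adds; the remaining computation is, as you say, bookkeeping, and it is in fact somewhat tidier than the paper's three-piece decomposition.
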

\begin{proof} 
Suppose we added boxes $a,a+1,\ldots,a+b$ to row $r$ of $\mu^{(0)}$ in order to obtain $\mu^{(N)}$.

We write
$$\overline{\mu}_r= \underbrace{c_0\mu_r^{(0)}+\cdots+c_{a-1}\mu_r^{(a-1)}}_{\circled{1}}+ \underbrace{c_a\mu_r^{(a)}+\cdots+c_{a+b}\mu_r^{(a+b)}}_{\circled{2}}+ \underbrace{c_{a+b+1}\mu_r^{(a+b+1)}+\cdots+c_{N}\mu_r^{(N)}}_{\circled{3}}.
$$
Next, $\mu_r^{(0)}=\cdots=\mu_r^{(a-1)}$, so
$$
\circled{1} = (c_0+\cdots+c_{a-1})\mu_r^{(0)}.
$$
Since $\mu_r^{(a+i)}=\mu_r^{(0)}+(i+1)$ for $i=0,\ldots,b$, then
\begin{align*}
\circled{2} &=  (c_a+\cdots+c_{a+b})\mu_r^{(0)}+(c_a+2c_{a+1}+\cdots+(b+1)c_{a+b}).
\end{align*}
Finally, $\mu_r^{(a+b)}=\mu_r^{(a+b+1)}=\cdots=\mu_r^{(n)}$, so
\begin{align*}
\circled{3} &= (c_{a+b+1}+\cdots+c_{N})\mu_r^{(0)}+(c_{a+b+1}+\cdots+c_{N})(b+1).
\end{align*}
Therefore we conclude
$\overline{\mu}_r=\mu^{(0)}_r+c_a+2c_{a+1}+\cdots+
(b+1)c_{a+b}+\cdots+(b+1)c_N$.
The lemma then follows by a simple induction.
\end{proof}

The following is immediate from the definitions.
\begin{lemma}\label{lem:sumla} Let $b_r=\mu^{(N)}_r-\mu^{(0)}_r$, i.e. $b_r$ is the number of extra boxes $\mu^{(N)}$ has in row $r$. For any $r<r_k$
$$
\mu^{(k)}_1+\cdots+\mu^{(k)}_r=\mu^{(0)}_1+\cdots+\mu^{(0)}_r+(b_{1}+\cdots+b_{r}).
$$
\end{lemma}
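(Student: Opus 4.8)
The plan is to rewrite both sides of the asserted identity as counts of added boxes and to match those counts using the monotonicity of the sequence $r_1,r_2,\ldots,r_N$ of rows in which the successive boxes are placed. So the first step is to record this monotonicity: $r_1\le r_2\le\cdots\le r_N$. Indeed, passing from $\mu^{(k-1)}$ to $\mu^{(k)}$ only increases entry $r_k$, so for every row $r'<r_k$ (hence $r_k\notin\{r'-1,r'\}$) it leaves $\mu_{r'-1}$ and $\mu_{r'}$ unchanged; since membership of $\mu^{(0)}$ never changes either, the eligibility of row $r'$ — namely $\mu_{r'}-\mu^{(0)}_{r'}<r'-1$ together with the box addition producing a Young diagram — is the same at step $k$ and at step $k+1$. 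As $r_k$ was chosen at step $k$ to be the northmost eligible row, every row above $r_k$ is ineligible at step $k$, hence also at step $k+1$, and therefore $r_{k+1}\ge r_k$.

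Next I would compute both sides. Using $\mu^{(k)}=\mu^{(0)}+e_{r_1}+\cdots+e_{r_k}$,
\[
\mu^{(k)}_1+\cdots+\mu^{(k)}_r=\bigl(\mu^{(0)}_1+\cdots+\mu^{(0)}_r\bigr)+\#\{\,i\le k:\ r_i\le r\,\}.
\]
Now invoke the hypothesis $r<r_k$ together with monotonicity: if $r_i\le r$ then $r_i<r_k$, which forces $i<k$ because $r_1\le\cdots\le r_N$; thus $\{\,i:r_i\le r\,\}\subseteq\{1,\ldots,k\}$, and the count above equals $\#\{\,i\in\{1,\ldots,N\}:r_i\le r\,\}$, i.e.\ the total number of boxes ever added in rows $1,\ldots,r$. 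By definition of the $b_j$ (using $\mu^{(N)}=\mu^{(0)}+e_{r_1}+\cdots+e_{r_N}$), that total is $\sum_{j\le r}\bigl(\mu^{(N)}_j-\mu^{(0)}_j\bigr)=b_1+\cdots+b_r$, which is exactly the right-hand side. The degenerate case in which no box is ever added in rows $1,\ldots,r$ (so both counts are $0$ and both sides reduce to $\mu^{(0)}_1+\cdots+\mu^{(0)}_r$) is covered by the same computation.

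There is essentially no real obstacle here — this is the ``immediate from the definitions'' step the paper advertises — beyond making the monotonicity of $(r_k)$ explicit. That observation is also the one implicitly used in the proof of Lemma~\ref{lem:sumx}, where the boxes added to a fixed row are taken to have consecutive indices $a,a+1,\ldots,a+b$; it is the only ingredient that is not pure bookkeeping, and it follows from the one-line eligibility argument given above.
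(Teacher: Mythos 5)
Your proof is correct, and since the paper only asserts the lemma is ``immediate from the definitions,'' you are supplying exactly the bookkeeping the authors elected to omit. The one genuinely substantive ingredient you isolate --- that the row sequence $r_1\le r_2\le\cdots\le r_N$ is weakly increasing, which follows from the fact that adding a box in row $r_k$ leaves the eligibility of every higher row unchanged --- is indeed what makes the hypothesis $r<r_k$ do its job (it forces $\#\{i\le k: r_i\le r\}=\#\{i\le N: r_i\le r\}=b_1+\cdots+b_r$), and, as you note, this same monotonicity is what the paper tacitly uses in the proof of Lemma~\ref{lem:sumx} when it assumes the boxes landing in a fixed row have consecutive indices $a,\ldots,a+b$. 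No gaps; this is the intended argument, made explicit.
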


Let $\ell$ be the largest $i$ such that $r_i=r$. We consider two cases.

\noindent
{\sf Case 1} ($r<r_K$): Observe that
\[c_1+2c_2+\cdots+\ell c_{\ell}+\cdots+\ell c_N =(c_1+\cdots+c_N)+(c_2+\cdots+c_N)+\cdots+(c_{\ell}+\cdots+c_N)\\
\le \ell.
\]
Since $b_1+\cdots+b_r$ equals the number of boxes placed from rows $1$ through $r$ and the $\ell$-th box is the last box placed in row $r$, then 
$\ell=b_1+\cdots+b_r$. Combining this equality with the inequality just derived, we see
\begin{equation}
\label{eqn:justderived}
c_1+2c_2+\cdots+\ell c_{\ell}+\cdots+\ell c_N\leq b_{1}+\cdots+b_{r}.
\end{equation}
By (\ref{eqn:justderived}) together with Lemmas \ref{lem:sumx} and \ref{lem:sumla},
$${\overline \mu}_1+\cdots+{\overline \mu}_r\le \mu_1^{(K)}+\cdots+\mu_r^{(K)}.
$$

\noindent
{\sf Case 2} ($r\ge r_K$): Here, we notice that 
\begin{equation}
\label{eqn:acc}
\mu^{(K)}_1+\cdots+\mu^{(K)}_r=\mu^{(0)}_1+\cdots+\mu^{(0)}_r+K.
\end{equation}
Observe that
\begin{equation}
\label{eqn:vvc}
 c_1+2c_2+\cdots+\ell c_{\ell}+\cdots+\ell c_N \le c_1+2c_2+\cdots+Nc_N=K,
\end{equation}
where the equality follows from Lemma \ref{lem:sumx}.
Apply Lemma \ref{lem:sumx} to the left hand side of (\ref{eqn:vvc}) and use (\ref{eqn:acc}) to replace $K$ on the right hand side, to conclude 
${\overline \mu}_1+\cdots+{\overline \mu}_r\le \mu_1^{(K)}+\cdots+\mu_r^{(K)}.
$
Hence in either case, ${\overline \mu}\leq_D \mu^{(K)}$, as required.
\qed

Let $w_1,\ldots,w_M$ be any exponent vectors of
$G_{\lambda}(x_1,\ldots,x_n)$. SNPness means that
if $w\in {\sf conv}\{w_1,\ldots,w_M\}$ is a lattice point then $[x^w]G_{\lambda}(x_1,\ldots,x_n))\neq 0$.
Suppose that
$|w|=K$. 
Without loss of generality, $M=N$ and there is a unique vector $w_k$ with $|w_k|=k$. Then by Claim~B,
$w$ is majorized by ${\overline \mu}$. Claim~C says ${\overline \mu}$ is majorized by $\mu^{(K)}$ and hence $w^\downarrow\leq_D \mu^{(K)}$.
By (\ref{eqn:rado}) we conclude 
$w\in {\mathcal P}_{\mu^{(K)}}$, which by (\ref{eqn:Newtonhomog})
completes the proof of the Theorem. Indeed, we have shown that ${\sf Newton}(G_{\lambda}(x_1,\ldots,x_n))=\bigcup_{k=0}^N {\mathcal P}_{\mu^{(k)}}$.
\qed

\section*{Acknowledgements}
We thank Karola M\'esz\'aros and Avery St.~Dizier for a helpful communication. We used SAGE code of C.~Monical during our investigation.
AY was supported by an NSF grant.

\end{document}